\newcommand{\arxiv}[1]{\href{http://arxiv.org/abs/math/#1}{arXiv:#1 [\BibField{category}]}}
\theoremstyle{plain}
\newtheorem{thm}{Theorem}
\newaliascnt{cor}{thm}
\newaliascnt{lem}{thm}
\newtheorem{lem}[lem]{Lemma}
\newaliascnt{prop}{thm}
\theoremstyle{definition}
\newaliascnt{defn}{thm}
\newtheorem{defn}[defn]{Definition}
\theoremstyle{remark}
\DeclareMathOperator{\convhull}{Conv}
\DeclareMathOperator{\fitt}{Fitt}
\DeclareMathOperator{\interior}{int}
\DeclareMathOperator{\newt}{Newt}
\DeclareMathOperator{\rank}{rank}
\DeclareMathOperator{\spec}{Spec}
\newcommand*{\jac}{\mathscr{J}ac}
\newcommand*{\logjac}{\mathscr{J}ac^{log}}
\newcommand*{\monoid}{\mathsf{S}}
\newcommand*{\sheaf}[1]{\mathscr#1} 
\newcommand*{\multideal}{\sheaf{J}}
\newcommand*{\NN}{\mathbb{N}}
\newcommand*{\RR}{\mathbb{R}}
\newcommand*{\ZZ}{\mathbb{Z}}
\renewcommand*{\vec}{\mathbf}
\begin{document}

\title{Mather-Jacobian Multiplier Ideals on Toric Varieties}
\date{\today}
\author{Howard M Thompson}
\address{Mathematics Department\\ 402 Murchie Science Building \\ 303 East Kearsley Street \\ Flint, MI 48502-1950}
\email{\href{mailto:hmthomps@umflint.edu}{hmthomps@umflint.edu}}
\urladdr{\url{http://homepages.umflint.edu/~hmthomps/}}
\thanks{}
\subjclass[2010]{Primary: 14F18; Secondary: 14M25}
\keywords{Multiplier ideals, toric varieties}
\begin{abstract}
  This paper provides a formula for the Mather-Jacobian multiplier ideals of torus invariant ideals on (not necessarily normal) toric varieties that generalizes Howald’s formula for the multiplier ideal of monomial ideals in a polynomial ring.
\end{abstract}
\maketitle

\section{Introduction}

In \vref{s:nash}, we recall the definition of Nash blowup and the relationship between Nash blowups and Jacobian ideals. In \vref{s:logjac}, we recall the definition of logarithmic Jacobian ideals and the relationships between Nash blowups, logarithmic Jacobian ideals, and Jacobian ideals. In \vref{s:MJ}, we recall the definition of Mather-Jacobian multiplier ideals and present a formula for the Mather-Jacobian multiplier ideals of torus invariant ideals on (not necessarily normal) toric varieties.

\section{Nash Blowup and the Jacobian} \label{s:nash}

In this section we loosely follow de Fernex \& Docampo~\cite[Section~2]{MR3141731}.

Let $X$ be a separated reduced scheme of finite type over an algebraically closed field $\Bbbk$ of characteristic zero. The \emph{Jacobian ideal sheaf} of $X$, $\jac_X\subseteq\sheaf{O}_X$, is the smallest non-zero Fitting ideal of the cotangent sheaf $\Omega_X$. If $X$ is of pure dimension $d$, then $\jac_X=\fitt^d(\Omega_X)$ is the $d$-th Fitting ideal of $\Omega_X$. If $Y$ is a smooth scheme and $f:Y\rightarrow X$ is a morphism of schemes, the Jacobian ideal of $f$ is the $0$-th Fitting ideal $\jac_f=\fitt^0(\Omega_{Y/X})$ of $\Omega_{Y/X}$. If $Y$ is equidimensional of dimension $d$, then $\Omega_Y^d$ is an invertible sheaf and the image of the map $f^*\Omega_X^d\rightarrow\Omega_Y^d$ is $\jac_f\cdot\Omega_Y^d$.

Let $X$ be a separated reduced scheme of finite type over $\Bbbk$ of pure dimesion $d$. The \emph{Nash blowup} of $X$ is the birational map $\pi:\widehat{X}\rightarrow X$ satisfying the universal property: if $\mu:Y\rightarrow X$ is a proper birational map such that $\mu^*\Omega^1_X/(torsion)$ is a locally free $\sheaf{O}_Y$-module, then $\mu$ factors uniquely through $\pi$.  Following Ein, Ishii \& Musta{\c{t}}{\u{a}}~\cite[Remark~2.3]{EIM2011} and noticing that $X$ need not be a variety to apply Lipman~\cite[Lemma~1]{MR0237511}, we see every log resolution of $\jac_X$ factors through the Nash blowup of $X$.  In fact, by Oneto \& Zatini~\cite[Corollary~5.2]{MR1218672}, if $\jac_{Y}|_X$ is the restriction to $X$ of the Jacobian of a locally complete intersection $Y$ of dimension $d$ containing $X$, then the Nash blowup is the blowup of $\jac_{Y}|_X$.

\section{The Jacobian and the Logarithmic Jacobian} \label{s:logjac}

From now on, let $\monoid$ be an affine semigroup (finitely generated torsion-free cancellative monoid) and let $M=\ZZ\monoid\cong\ZZ^d$ be its quotient group. So, $X=\spec\Bbbk[\monoid]$ is a (not necessarily normal) affine toric variety. We write $\vec{m}$ for an element of $M_\RR=M\otimes_\ZZ\RR$ and we write $\chi^\vec{m}$ for the element of $\Bbbk[M]$ corresponding to $\vec{m}\in M$. We have a $\Bbbk[\monoid]$-module homomorphism
\[
  \varphi:\Omega^1_{\Bbbk[\monoid]/\Bbbk}\rightarrow\Bbbk[\monoid]\otimes_\ZZ M;\quad\varphi(d\chi^\vec{m})=\chi^\vec{m}\otimes\vec{m}.
\]

\begin{defn}
  The \emph{logarithmic Jacobian ideal} $\logjac_X$ of $X$ is the ideal such that the image of $\wedge^d\varphi$ is $\logjac_X\otimes_\ZZ\wedge^d M$. This ideal is generated by elements $\chi^{\vec{m}_1+\vec{m}_2+\cdots+\vec{m}_d}$ such that $\vec{m}_1\wedge\vec{m}_2\wedge\cdots\wedge\vec{m}_d\neq\vec{0}$. Evidently, it suffices to take the $\vec{m}_i$ from some fixed generating set for $\monoid$.
\end{defn}

According to Gonz{\'a}lez P{\'e}rez \& Teissier~\cite[Proposition~60]{MR3183106}, the Nash blowup $\pi:\widehat{X}\rightarrow X$ of $X$ is the blowup of $\logjac_X$. Examining the proof of this proposition provides a convenient formula for the Jacobian of $X$. Fix a surjective monoid homomorphism $\phi:\NN^r\rightarrow\monoid$. This surjection induces a short exact sequence of Abelian groups
\[
  \xymatrix{
    0 \ar[r] & \mathscr{L} \ar[rr] & & \ZZ^r \ar[rr]^{\phi^{gp}} & & M \ar[r] & 0
  }.
\]
For each element $\vec{u}\in\ZZ^r$, write $\vec{u}=\vec{u}^+-\vec{u}^-$ where $\vec{u}^+$ and $\vec{u}^-$ are elements of $\NN^r$ with disjoint support.  The monoid homomorphism $\phi$ induces a $\Bbbk$-algebra homomorphism $\phi^{alg}:\Bbbk[x_1,x_2,\ldots,x_r]\rightarrow\Bbbk[\monoid]$ with kernel $I_\mathscr{L}=\left(\vec{x}^{\vec{u}^+}-\vec{x}^{\vec{u}^-}\mid\vec{u}\in\mathscr{L}\right)$. Set $c=\rank\mathscr{L}=r-d$, let $\vec{e}_1,\vec{e}_2,\ldots\vec{e}_r$ be the standard basis in $\NN^r$, let $\vec{1}=\sum_{i=1}^r\vec{e}_i$ be the all ones vector, and fix a generating set $\left\{f_1=\vec{x}^{\vec{u}_1^+}-\vec{x}^{\vec{u}_1^-},f_2=\vec{x}^{\vec{u}_2^+}-\vec{x}^{\vec{u}_2^-},\ldots,f_n=\vec{x}^{\vec{u}_n^+}-\vec{x}^{\vec{u}_n^-}\right\}$ for $I_\mathscr{L}$. 

Let $J$ be the $r\times n$ matrix $\begin{bmatrix}\frac{\partial f_j}{\partial x_i}\end{bmatrix}$, let $U$  be the $r\times n$ matrix with columns $\vec{u}_1,\vec{u}_2,\ldots,\vec{u}_n$, let $K=\{k_1,k_2,\ldots,k_c\}\subseteq\{1,2,\ldots,r\}$, and let $L=\{j_1,j_2,\ldots,j_c\}\subseteq\{1,2,\ldots,n\}$. Gonz{\'a}lez P{\'e}rez \& Teissier prove following relationship between the $c\times c$ minors $J_{KL}$ and $U_{KL}$.
\begin{equation} \label{eq:minors}
  \vec{x}^{\vec{e}_{k_1}+\vec{e}_{k_2}+\cdots\vec{e}_{k_c}}\cdot J_{KL}\equiv\vec{x}^{\vec{u}_{j_1}^++\vec{u}_{j_2}^++\cdots+\vec{u}_{j_c}^+}U_{KL} \mod I_\mathscr{L}.
\end{equation}

Now, fix $1\leq j_1<j_2<\cdots<j_c\leq n$ such that $\vec{0}\neq\vec{u}_{j_1}\wedge\vec{u}_{j_2}\wedge\cdots\wedge\vec{u}_{j_c}\in\wedge^c\mathscr{L}$. Let $I=\left(\vec{x}^{\vec{u}_{j_1}^+}-\vec{x}^{\vec{u}_{j_1}^-},\vec{x}^{\vec{u}_{j_2}^+}-\vec{x}^{\vec{u}_{j_2}^-},\ldots,\vec{x}^{\vec{u}_{j_c}^+}-\vec{x}^{\vec{u}_{j_c}^-}\right)\subseteq\Bbbk[\vec{x}]$ Then, $Y=\spec\left(\Bbbk[\vec{x}]/I\right)$ is a reduced complete intersection of dimension $d$ containing $X$. After multiplying both sides of the congruence \vref{eq:minors} by $\vec{x}^{\vec{1}-(\vec{e}_{k_1}+\vec{e}_{k_2}+\cdots\vec{e}_{k_c})}$, varying $K$, and considering the restriction to $X$, one obtains
\[
  \chi^{\phi^{gp}(\vec{1})}\cdot\jac_{Y|X}=\chi^{\phi^{gp}(\vec{u}_{j_1}^++\vec{u}_{j_2}^++\cdots+\vec{u}_{j_c}^+)}\cdot\logjac_X.
\]

\begin{lem}
  $\jac_X$ is generated by
  \[
    \left\{\chi^{\phi^{gp}(\vec{e}_{i_1}+\vec{e}_{i_2}+\cdots+\vec{e}_{i_d}+\vec{u}_{j_1}^++\vec{u}_{j_2}^++\cdots+\vec{u}_{j_c}^+-\vec{1})}\,\left|\,\begin{matrix}
    \vec{0}\neq\phi^{gp}(\vec{e}_{i_1})\wedge\phi^{gp}(\vec{e}_{i_2})\wedge\cdots\wedge\phi^{gp}(\vec{e}_{i_d})\in\wedge^d M; \\
    \vec{0}\neq\vec{u}_{j_1}\wedge\vec{u}_{j_2}\wedge\cdots\wedge\vec{u}_{j_c}\in\wedge^c\mathscr{L}
    \end{matrix}\right.\right\}.
  \]
\end{lem}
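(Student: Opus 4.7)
My plan is to obtain $\jac_X$ from the standard Fitting-ideal description applied to the closed embedding $X\hookrightarrow\affspace^r$ cut out by $I_\mathscr{L}$, then simplify each resulting generator via the congruence \eqref{eq:minors}, and finally decide which of the simplified monomials are nonzero through a linear-algebra argument in the lattices $\mathscr{L}$ and $M$. Since $f_1,\ldots,f_n$ generate $I_\mathscr{L}$, the conormal sequence yields a free presentation $\Bbbk[\monoid]^n\xrightarrow{J|_X}\Bbbk[\monoid]^r\to\Omega_X\to 0$ whose matrix is the restriction of $J$; hence $\jac_X=\fitt^d(\Omega_X)$ is generated by the images in $\Bbbk[\monoid]$ of all $c\times c$ minors $J_{KL}$ with $K\subseteq\{1,\ldots,r\}$ and $L\subseteq\{1,\ldots,n\}$ of size $c$.

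Next I would multiply both sides of \eqref{eq:minors} by $\vec{x}^{\vec{1}-(\vec{e}_{k_1}+\cdots+\vec{e}_{k_c})}=\vec{x}^{\vec{e}_{i_1}+\cdots+\vec{e}_{i_d}}$, where $\{i_1,\ldots,i_d\}$ is the complement of $K$, and pass to $\Bbbk[\monoid]$ to obtain
\[
  \chi^{\phi^{gp}(\vec{1})}\cdot J_{KL}|_X=\chi^{\phi^{gp}(\vec{e}_{i_1}+\cdots+\vec{e}_{i_d}+\vec{u}_{j_1}^++\cdots+\vec{u}_{j_c}^+)}\cdot U_{KL}.
\]
Because $\Bbbk[\monoid]$ is an integral domain, $\chi^{\phi^{gp}(\vec{1})}$ is a non-zerodivisor and this identity pins down $J_{KL}|_X$ inside the fraction field $\Bbbk[M]$. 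When $U_{KL}=0$ the generator $J_{KL}|_X$ vanishes and may be discarded; when $U_{KL}\neq 0$ the exponent on the right must lie in $\monoid$ (since the left side does), and $J_{KL}|_X$ equals the monomial $\chi^{\phi^{gp}(\vec{e}_{i_1}+\cdots+\vec{e}_{i_d}+\vec{u}_{j_1}^++\cdots+\vec{u}_{j_c}^+-\vec{1})}$ up to the unit $U_{KL}\in\Bbbk^*$.

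It then remains to identify the pairs $(K,L)$ for which $U_{KL}\neq 0$. Expanding $\vec{u}_{j_1}\wedge\cdots\wedge\vec{u}_{j_c}$ in the standard basis of $\wedge^c\ZZ^r$ identifies $U_{KL}$ as its coordinate along $\vec{e}_{k_1}\wedge\cdots\wedge\vec{e}_{k_c}$, equivalently as its image under $\wedge^c$ of the coordinate projection $\ZZ^r\to\ZZ^K$. Since $\wedge^c\mathscr{L}$ is torsion-free of rank one, nonvanishing is equivalent to the conjunction of $\vec{u}_{j_1}\wedge\cdots\wedge\vec{u}_{j_c}\neq\vec{0}$ in $\wedge^c\mathscr{L}$ and $\mathscr{L}\cap\ZZ^I=0$ inside $\ZZ^r$. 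Using the short exact sequence $0\to\mathscr{L}\to\ZZ^r\to M\to 0$, the latter condition is equivalent to injectivity of $\phi^{gp}|_{\ZZ^I}$, hence to $\vec{0}\neq\phi^{gp}(\vec{e}_{i_1})\wedge\cdots\wedge\phi^{gp}(\vec{e}_{i_d})\in\wedge^d M$. This recovers exactly the indexing set in the lemma; the main technical point is precisely this last translation of the algebraic minor condition into the two wedge conditions via the lattice exact sequence.
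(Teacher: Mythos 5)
Your proposal is correct and takes essentially the same route as the paper, whose proof likewise multiplies the congruence \eqref{eq:minors} by $\vec{x}^{\vec{1}-(\vec{e}_{k_1}+\cdots+\vec{e}_{k_c})}$, varies both $K$ and $L$, and restricts to $X$ to deduce the ideal identity $\chi^{\phi^{gp}(\vec{1})}\cdot\jac_X=\logjac_X\cdot\bigl(\chi^{\phi^{gp}(\vec{u}_{j_1}^++\cdots+\vec{u}_{j_c}^+)}\mid\vec{0}\neq\vec{u}_{j_1}\wedge\cdots\wedge\vec{u}_{j_c}\in\wedge^c\mathscr{L}\bigr)$, from which the stated generators follow. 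Your write-up simply makes explicit what the paper's ``we deduce'' leaves implicit --- the Fitting-ideal presentation of $\Omega_X$ and the equivalence $U_{KL}\neq 0\Leftrightarrow$ (both wedge conditions) via the split exact sequence $0\to\mathscr{L}\to\ZZ^r\to M\to 0$ --- with only the trivial slip that $\Bbbk[M]$ is a localization of $\Bbbk[\monoid]$, not its fraction field.
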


\begin{proof}
  Continuing as above, allow $L$ to vary as well. We deduce
  \[
    \chi^{\phi^{gp}(\vec{1})}\cdot\jac_X=\logjac_X\cdot\left(\left.\chi^{\phi^{gp}(\vec{u}_{j_1}^++\vec{u}_{j_2}^++\cdots+\vec{u}_{j_c}^+)}\,\right|\,\vec{0}\neq\vec{u}_{j_1}\wedge\vec{u}_{j_2}\wedge\cdots\wedge\vec{u}_{j_c}\in\wedge^c\mathscr{L}\right).
  \]
\end{proof}

\section{Mather-Jacobian Multiplier Ideals} \label{s:MJ}

\begin{defn}
  Let $\mathfrak{a}\subseteq\sheaf{O}_X$ be a nonzero ideal sheaf on a variety $X$, let $\mu:Y\rightarrow X$ be a log resolution of $\mathfrak{a}\cdot\jac_X$, let $\sheaf{O}_Y\left(-\widehat{K}_{Y/X}\right)=\jac_\mu$, let $\sheaf{O}_Y(-J_{Y/X})=\jac_X\cdot\sheaf{O}_Y$, let $\sheaf{O}_Y(-Z_{Y/X})=\mathfrak{a}\cdot\sheaf{O}_Y$, and let $\lambda\in\RR_{\geq0}$. The \emph{Mather-Jacobian multiplier ideal} of $\mathfrak{a}$ with exponent $\lambda$ is
  \[
    \widehat{\multideal}(X,\mathfrak{a}^\lambda)=\mu_*\sheaf{O}_Y\left(\widehat{K}_{Y/X}-J_{Y/X}-\lfloor\lambda Z_{Y/X}\rfloor\right).
  \]
\end{defn}

Now, we are ready to develop a formula for the Mather-Jacobian multiplier ideals of torus invariant ideals on (not necessarily normal) toric varieties. it suffice to consider the affine case. Let $X=\spec\Bbbk[\monoid]$ be a (not necessarily normal) affine toric variety, let $\mathfrak{a}\subseteq\Bbbk[\monoid]$ be a nonzero monomial ideal, let $\mu:X_\Sigma\rightarrow X$ be a toric log resolution of $\mathfrak{a}\cdot\jac_X$. 

\begin{lem} \label{lem:k-hat}
  If $\mathfrak{a}\subseteq\sheaf{O}_X$ is a nonzero invariant ideal sheaf on a toric variety $X$ and $\mu:X_\Sigma\rightarrow X$ is a toric log resolution of $\mathfrak{a}\cdot\jac_X$, then
  \[
    \jac_\mu\cdot\logjac_{X_\Sigma}=\mu^*\logjac_X\text{ and }\sheaf{O}_{X_\Sigma}\left(\widehat{K}_{X_\Sigma/X}\right)=\sheaf{Hom}_{\sheaf{O}_{X_\Sigma}}\left(\mu^*\logjac_X,\logjac_{X_\Sigma}\right).
  \]
\end{lem}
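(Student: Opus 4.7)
The plan is to exploit the naturality, with respect to the toric morphism $\mu$, of the $\sheaf{O}$-linear map $\varphi$ defined in Section~\ref{s:logjac}. Since $\mu$ is birational and toric, $X$ and $X_\Sigma$ share the character lattice $M$, and applying $\wedge^d$ to the naturality square yields
\[
\xymatrix{
\mu^*\Omega^d_X \ar[r] \ar[d]_{\mu^*(\wedge^d\varphi_X)} & \Omega^d_{X_\Sigma} \ar[d]^{\wedge^d\varphi_{X_\Sigma}} \\
\sheaf{O}_{X_\Sigma}\otimes_\ZZ\wedge^d M \ar@{=}[r] & \sheaf{O}_{X_\Sigma}\otimes_\ZZ\wedge^d M.
}
\]
The whole argument reduces to computing the image of the composite $\mu^*\Omega^d_X\to\sheaf{O}_{X_\Sigma}\otimes_\ZZ\wedge^d M$ along the two paths.

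First I would record that the three ideal sheaves in the statement --- $\jac_\mu$, $\logjac_{X_\Sigma}$, and $\mu^*\logjac_X$ --- are all invertible on $X_\Sigma$. On each smooth affine toric chart $\logjac_{X_\Sigma}$ is principal (generated by the product of the local coordinates). The pullback $\mu^*\logjac_X$ is invertible because every log resolution of $\jac_X$ factors through the Nash blowup of $X$ (recalled in Section~\ref{s:nash}), which by Gonz\'alez P\'erez--Teissier is the blowup of $\logjac_X$. And $\jac_\mu$ is invertible as part of the standing log-resolution hypothesis, this being what makes $\widehat{K}_{X_\Sigma/X}$ well-defined as a divisor.

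Next, going across and then down, the image of $\mu^*\Omega^d_X$ in $\Omega^d_{X_\Sigma}$ is $\jac_\mu\cdot\Omega^d_{X_\Sigma}$ by the definition of $\jac_\mu$ recalled in Section~\ref{s:nash}, and $\wedge^d\varphi_{X_\Sigma}$ carries this $\sheaf{O}_{X_\Sigma}$-linearly onto $\jac_\mu\cdot\logjac_{X_\Sigma}\otimes_\ZZ\wedge^d M$ by the definition of $\logjac_{X_\Sigma}$. Going down and then across, the image is $\mu^*\logjac_X\otimes_\ZZ\wedge^d M$, where $\mu^*\logjac_X$ is the inverse image ideal. Commutativity of the square delivers the first identity $\jac_\mu\cdot\logjac_{X_\Sigma}=\mu^*\logjac_X$.

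For the second identity, invertibility of all three factors allows me to rewrite the first as $\jac_\mu\cong\mu^*\logjac_X\otimes\logjac_{X_\Sigma}^{-1}$, whose $\sheaf{O}_{X_\Sigma}$-dual is $\sheaf{Hom}_{\sheaf{O}_{X_\Sigma}}(\mu^*\logjac_X,\logjac_{X_\Sigma})$; comparison with $\sheaf{O}_{X_\Sigma}(\widehat{K}_{X_\Sigma/X})=\jac_\mu^{-1}$ completes the derivation. The main obstacle is the bookkeeping that establishes the three invertibility claims and the identification of the inverse-image interpretation of $\mu^*\logjac_X$ with its line-bundle pullback; once those are pinned down, the rest is a short diagram chase.
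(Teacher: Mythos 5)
Your proposal is correct and takes essentially the same route as the paper's proof: both compute the image of $\mu^*\Omega^d_X$ in $\sheaf{O}_{X_\Sigma}\otimes_\ZZ\wedge^d M$ in two ways via the map $\varphi$ of \cref{s:logjac}, obtaining $\jac_\mu\cdot\logjac_{X_\Sigma}\otimes_\ZZ\wedge^d M=\mu^*\logjac_X\otimes_\ZZ\wedge^d M$, and then conclude both identities from the invertibility of $\jac_\mu$, $\logjac_{X_\Sigma}$, and $\mu^*\logjac_X$ on $X_\Sigma$. Your added justifications of those invertibility claims (factorization through the Nash blowup as the blowup of $\logjac_X$, local principality of $\logjac_{X_\Sigma}$ on smooth charts) simply make explicit what the paper asserts without proof.
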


\begin{proof}
  Consider the map
  \[
    \xymatrix{
      \mu^*\Omega_X^d \ar[rr] \ar@{=}[d] & & \Omega_{X_\Sigma}^d \ar@{=}[d] \\
      \mu^*(\logjac_X\otimes_\ZZ\wedge^d M)  \ar[rr] & & \logjac_{X_\Sigma}\otimes_\ZZ\wedge^d M
    }
  \]
  with image $\jac_\mu\cdot\Omega_{X_\Sigma}^d=\jac_\mu\cdot\logjac_{X_\Sigma}\otimes_\ZZ\wedge^d M$. Since $\mu^*\logjac_X$, $\logjac_{X_\Sigma}$ and $\jac_\mu$ are line bundles on $X_\Sigma$, the result follows.
\end{proof}
Recall the notation of \cref{s:logjac}, let $\mathfrak{a}\subseteq\Bbbk[\monoid]$ be a nonzero monomial ideal, and consider the following subsets of $M_\RR$.
\begin{align*}
  \sigma\spcheck &= \convhull(\monoid) \\
  \mathsf{J}^{log} &= \{\phi^{gp}(\vec{e}_{i_1}+\vec{e}_{i_2}+\cdots+\vec{e}_{i_d})\mid
  \phi^{gp}(\vec{e}_{i_1})\wedge\phi^{gp}(\vec{e}_{i_2})\wedge\cdots\wedge\phi^{gp}(\vec{e}_{i_d})\neq\vec{0}\} \\
  \newt\left(\logjac_X\right) &= \convhull(\mathsf{J}^{log}+\monoid) \\
  \mathsf{J}' &= \{\phi^{gp}(\vec{u}_{j_1}^++\vec{u}_{j_2}^++\cdots+\vec{u}_{j_c}^+-\vec{1})\mid\vec{u}_{j_1}\wedge\vec{u}_{j_2}\wedge\cdots\wedge\vec{u}_{j_c}\neq\vec{0}\} \\
  \mathsf{J} &= \mathsf{J}^{log}+\mathsf{J}' \\
  \newt\left(\jac_X\right) &= \convhull(\mathsf{J}+\monoid) \\
  P &= \newt(\mathfrak{a}) = \convhull(\{\vec{m}\mid\chi^\vec{m}\in\mathfrak{a}\}) \\
  Q &= \convhull(\mathsf{J}'+\mathsf{S})=\left\{\vec{m}\mid\vec{m}+\newt\left(\logjac_X\right)\subseteq\newt\left(\jac_X\right)\right\}
\end{align*}

Let $\monoid_\sigma=\sigma\spcheck\cap M$. Thus, $X_\sigma=\spec\Bbbk[\monoid_\sigma]$ is a normal toric variety and the canonical map $X_\sigma\rightarrow X$ induced by the inclusion $\monoid\subseteq\monoid_\sigma$ is the normalization map. We say a map $\mu:X_\Sigma\rightarrow X$ from a normal toric variety is \emph{toric} if if factors as a toric map $X_\Sigma\rightarrow X_\sigma$ followed by the canonical map $X_\sigma\rightarrow X$.

\begin{thm}
  If $\mathfrak{a}\subseteq\sheaf{O}_X$ is a nonzero invariant ideal sheaf on a toric variety $X$ and $\mu:X_\Sigma\rightarrow X$ is a toric log resolution of $\mathfrak{a}\cdot\jac_X$, then
  \[
    \chi^\vec{m}\in\widehat{\multideal}(X,\mathfrak{a}^\lambda)\Leftrightarrow\vec{m}\in\interior\left(Q+\lambda P\right),
  \]
  where $\interior\left(Q+\lambda P\right)$ is the interior of the polyhedron $Q+\lambda P$.
\end{thm}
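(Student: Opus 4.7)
The strategy is to push everything through to the smooth toric fan $\Sigma$ and reduce to a system of facet inequalities on $M_\RR$. Because $\mathfrak{a}$, $\jac_X$, and $\logjac_X$ are all monomial and $\mu$ is toric, the multiplier ideal $\widehat{\multideal}(X,\mathfrak{a}^\lambda)$ is torus-invariant, hence generated by monomials, so it suffices to determine when $\chi^\vec{m}$ is a global section of $\sheaf{O}_{X_\Sigma}\bigl(\widehat{K}_{X_\Sigma/X} - J_{X_\Sigma/X} - \lfloor\lambda Z_{X_\Sigma/X}\rfloor\bigr)$. By the standard toric criterion this amounts to requiring that, for each ray $\rho$ of $\Sigma$ with primitive generator $v_\rho$, the sum of $\langle v_\rho,\vec{m}\rangle$ and the coefficient of $D_\rho$ in the exponent divisor is nonnegative.

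Next I compute those coefficients explicitly. Pullback of a torus-invariant ideal is controlled by the support function $h_C(v)=\min\{\langle v,\vec{x}\rangle\mid\vec{x}\in C\}$ of its Newton polyhedron, so $J_{X_\Sigma/X}$ and $Z_{X_\Sigma/X}$ have coefficients $h_{\newt(\jac_X)}(v_\rho)$ and $h_P(v_\rho)$ along $D_\rho$. On the smooth toric $X_\Sigma$ a direct affine-chart calculation from the definition gives $\logjac_{X_\Sigma}=\sheaf{O}_{X_\Sigma}(-\sum_\rho D_\rho)$, so \cref{lem:k-hat} yields $\widehat{K}_{X_\Sigma/X}=\sum_\rho\bigl(h_{\newt(\logjac_X)}(v_\rho)-1\bigr)D_\rho$. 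Assembling the membership inequality and using that every summand other than $\lfloor\lambda h_P(v_\rho)\rfloor$ is an integer (vertices of lattice Newton polyhedra are lattice points), the usual $\lfloor\cdot\rfloor$ manipulation converts the condition into the strict real inequality
\[
  \langle v_\rho,\vec{m}\rangle > \bigl(h_{\newt(\jac_X)}(v_\rho)-h_{\newt(\logjac_X)}(v_\rho)\bigr)+\lambda h_P(v_\rho)
\]
for every ray $\rho$ of $\Sigma$.

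To finish I translate this into membership in $\interior(Q+\lambda P)$. The decomposition $\mathsf{J}=\mathsf{J}^{log}+\mathsf{J}'$ gives the Minkowski identity $\newt(\jac_X)=\newt(\logjac_X)+Q$, so $h_{\newt(\jac_X)}-h_{\newt(\logjac_X)}=h_Q$ and the right-hand side above equals $h_{Q+\lambda P}(v_\rho)$. Since $\Sigma$ is a log resolution of $\mathfrak{a}\cdot\jac_X$, it refines the normal fans of $P$ and of $\newt(\jac_X)=\newt(\logjac_X)+Q$; because the normal fan of a Minkowski sum is the common refinement of the summands' normal fans, $\Sigma$ also refines the normal fans of $\newt(\logjac_X)$, of $Q$, and of $Q+\lambda P$. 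Hence every facet-defining ray of $Q+\lambda P$ is a ray of $\Sigma$, and the extra rays of $\Sigma$ merely produce redundant inequalities whose strict versions hold automatically at interior points of the full-dimensional polyhedron $Q+\lambda P$. I expect the main obstacle to be this last bookkeeping step — confirming that the rays of $\Sigma$ test interior membership on the nose — but it reduces to standard properties of normal fans and support functions under Minkowski sums.
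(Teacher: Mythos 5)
Your proposal is correct and follows essentially the same route as the paper's proof: both compute the coefficients of $\widehat{K}_{X_\Sigma/X}-J_{X_\Sigma/X}-\lfloor\lambda Z_{X_\Sigma/X}\rfloor$ ray by ray using \cref{lem:k-hat} together with $\logjac_{X_\Sigma}=\sum_{\rho}D_\rho$, use integrality of all terms except $\lambda a_\rho$ to trade the floor for a strict inequality, and simplify via the decomposition $\mathsf{J}=\mathsf{J}^{log}+\mathsf{J}'$. Your closing Minkowski-sum/normal-fan bookkeeping is a rephrasing of the paper's observation that $\mu$ principalizes the fractional ideal $\left(\chi^{\vec{m}}\mid\vec{m}\in\mathsf{J}'\right)$, so that every facet of $Q+\lambda P$ is supported on a hyperplane normal to some ray of $\Sigma$.
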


\begin{proof}
  We will mimic the proof of Howald's Theorem~\cite{MR1828466} as it appears in Cox, Little \& Schenck~\cite[Theorem~11.3.12]{MR2810322}. Let $\Sigma(1)$ be the set of rays of the fan $\Sigma$,let $\vec{n}_\rho$ be the primitive vector on the ray $\rho$ for each $\rho\in\Sigma(1)$, let $D_\rho$ be the prime divisor on $X_\Sigma$ for each $\rho\in\Sigma(1)$, and let $Z_{Y/X}=\sum_{\rho\in\Sigma(1)}a_\rho D_\rho$. Exploiting the facts that $\logjac_{X_\Sigma}=\sum_{\rho\in\Sigma(1)}D_\rho$, $\mu^*\logjac_X=\sum_{\rho\in\Sigma(1)}\min_{\vec{m}\in\mathsf{J}^{log}}\langle\vec{m},\vec{n}_\rho\rangle D_\rho$, and $J_{Y/X}=\sum_{\rho\in\Sigma(1)}\min_{\vec{m}\in\mathsf{J}}\langle\vec{m},\vec{n}_\rho\rangle D_\rho$ along with \vref{lem:k-hat}, we see
  \[
    \widehat{K}_{Y/X}-J_{Y/X}-\lfloor\lambda Z_{Y/X}\rfloor=\sum_{\rho\in\Sigma(1)}\left(-1+\min_{\vec{m}\in\mathsf{J}^{log}}\langle\vec{m},\vec{n}_\rho\rangle-\min_{\vec{m}\in\mathsf{J}}\langle\vec{m},\vec{n}_\rho\rangle-\lfloor\lambda a_\rho\rfloor\right)D_\rho.
  \]
  Thus,
  \begin{align*}
    \chi^\vec{m}\in\widehat{\multideal}(X,\mathfrak{a}^\lambda) &\Leftrightarrow \langle\vec{m},\vec{n}_\rho\rangle \geq 1-\min_{\vec{m}\in\mathsf{J}^{log}}\langle\vec{m},\vec{n}_\rho\rangle+\min_{\vec{m}\in\mathsf{J}}\langle\vec{m},\vec{n}_\rho\rangle+\lfloor\lambda a_\rho\rfloor & \text{ for all }\rho\in\Sigma(1) \\
    &\Leftrightarrow \langle\vec{m},\vec{n}_\rho\rangle > -\min_{\vec{m}\in\mathsf{J}^{log}}\langle\vec{m},\vec{n}_\rho\rangle+\min_{\vec{m}\in\mathsf{J}}\langle\vec{m},\vec{n}_\rho\rangle+\lambda a_\rho & \text{ for all }\rho\in\Sigma(1) \\
    &\Leftrightarrow \langle\vec{m},\vec{n}_\rho\rangle > \min_{\vec{m}\in\mathsf{J}'}\langle\vec{m},\vec{n}_\rho\rangle+\lambda a_\rho & \text{ for all }\rho\in\Sigma(1)
  \end{align*}
  Since the resolution $\mu:X_\Sigma\rightarrow X$ principalizes both $\logjac_X$ and $\jac_X$, it also principalizes the fractional ideal $\left(\chi^\vec{m}\mid\vec{m\in\mathsf{J}'}\right)$. Therefore, every facet of $Q+\lambda P$ is supported on a hyperplane of the form $\{\vec{m}\mid\langle\vec{m},\vec{n}_\rho\rangle=b_\rho\}$ for some $\rho\in\Sigma(1)$ and $b_\rho\in\RR$. We conclude
  \[
    \chi^\vec{m}\in\widehat{\multideal}(X,\mathfrak{a}^\lambda)\Leftrightarrow\vec{m}\in\interior\left(Q+\lambda P\right).
  \]
\end{proof}

\begin{bibdiv}
\begin{biblist}

\bib{MR2810322}{book}{
   author={Cox, David A.},
   author={Little, John B.},
   author={Schenck, Henry K.},
   title={Toric varieties},
   series={Graduate Studies in Mathematics},
   volume={124},
   publisher={American Mathematical Society, Providence, RI},
   date={2011},
   pages={xxiv+841},
   isbn={978-0-8218-4819-7},
   review={\MR{2810322}},
   doi={10.1090/gsm/124},
}

\bib{MR3141731}{article}{
   author={de Fernex, Tommaso},
   author={Docampo, Roi},
   title={Jacobian discrepancies and rational singularities},
   journal={J. Eur. Math. Soc. (JEMS)},
   volume={16},
   date={2014},
   number={1},
   pages={165--199},
   issn={1435-9855},
   review={\MR{3141731}},
   doi={10.4171/JEMS/430},
}

\bib{EIM2011}{arxiv}{
   author={Ein, Lawrence},
   author={Ishii, Shihoko},
   author={Musta{\c{t}}{\u{a}}, Mircea},
   title={Multiplier ideals via Mather discrepancy},
   date={2011},
   pages={17},
   eprint={https://arxiv.org/abs/1107.2192},
   article-id={1107.2192},
   category={math.AG},
}

\bib{MR3183106}{article}{
   author={Gonz{\'a}lez P{\'e}rez, Pedro D.},
   author={Teissier, Bernard},
   title={Toric geometry and the Semple-Nash modification},
   journal={Rev. R. Acad. Cienc. Exactas F\'\i s. Nat. Ser. A Math. RACSAM},
   volume={108},
   date={2014},
   number={1},
   pages={1--48},
   issn={1578-7303},
   review={\MR{3183106}},
   doi={10.1007/s13398-012-0096-0},
}

\bib{MR1828466}{article}{
   author={Howald, J. A.},
   title={Multiplier ideals of monomial ideals},
   journal={Trans. Amer. Math. Soc.},
   volume={353},
   date={2001},
   number={7},
   pages={2665--2671 (electronic)},
   issn={0002-9947},
   review={\MR{1828466}},
   doi={10.1090/S0002-9947-01-02720-9},
}

\bib{MR0237511}{article}{
   author={Lipman, Joseph},
   title={On the Jacobian ideal of the module of differentials},
   journal={Proc. Amer. Math. Soc.},
   volume={21},
   date={1969},
   pages={422--426},
   issn={0002-9939},
   review={\MR{0237511}},
}

\bib{MR1218672}{article}{
   author={Oneto, A.},
   author={Zatini, E.},
   title={Remarks on Nash blowing-up},
   note={Commutative algebra and algebraic geometry, II (Italian) (Turin,
   1990)},
   journal={Rend. Sem. Mat. Univ. Politec. Torino},
   volume={49},
   date={1991},
   number={1},
   pages={71--82 (1993)},
   issn={0373-1243},
   review={\MR{1218672}},
}

\end{biblist}
\end{bibdiv}

\end{document}